\newtheorem{theorem}{Theorem}
\newtheorem{example}[theorem]{Example}
\newtheorem{definition}[theorem]{Definition}
\newtheorem{lemma}[theorem]{Lemma}
\newtheorem{remark}[theorem]{Remark}
\newtheorem{proposition}[theorem]{Proposition}
\newtheorem*{equi3}{An equivalent version of the ISP}
\begin{document}

\title[On the invariant subspace problem]
 {On the invariant subspace problem via universal Toeplitz operators on the Hardy space $H^{2}(\mathbb{D}^{2})$}

\author[João Marcos R. do Carmo]{João Marcos R. do Carmo}
\author[Marcos S. Ferreira]{Marcos S. Ferreira}

\subjclass{Primary 30H10, 47B35; Secondary 47A15}

\keywords{Invariant subspace problem, Universal operator, Toeplitz operator, Hardy space}

\date{September 06, 2023}

\begin{abstract}
The \textit{Invariant Subspace Problem} (ISP) for Hilbert spaces asks if every bounded linear operator has a non-trivial closed invariant subspace. Due to the existence of universal operators (in the sense of Rota) the ISP can be solved by proving that every minimal invariant subspace of a universal operator is one dimensional. In this paper, we obtain a nontrivial invariant subspace of $T^{*}_{\varphi}|_{M}$, where $T_{\varphi}$ is the Toeplitz operator on the Hardy space over the bidisk $H^{2}(\mathbb{D}^{2})$ induced by the symbol $\varphi\in H^{\infty}(\mathbb{D})$ and $M$ is a $T_{\varphi}^{*}$-invariant subspace. We use this fact to get sufficient conditions for the ISP.
\end{abstract}

\maketitle

\section{Introduction}

The \textit{Invariant Subspace Problem} (ISP) is one of the most important problems in functional analysis that remains unsolved for separable, infinite dimensional Hilbert spaces, which asks: given a Hilbert space $\mathcal{H}$ and a bounded linear operator $T$ on $\mathcal{H}$, does $T$ have a nontrivial invariant subspace? In recent years, several operator theorists have been developing approaches in an attempt to solve the ISP. Among these, we highlight some research involving Toeplitz operators and composition operators on the Hardy space and similar ones \cite{Cowen, CowenII, CowenIII, CowenIV, Carmo} and the classical book \cite{Radjavi} by Radjavi and Rosenthal and the monograph by Chalendar and Partington \cite{Chalendar}.

In 1960, Rota \cite{Rota} introduced the idea of an operator with an invariant subspace structure so rich as to model every Hilbert space operator. The notion of this operator is what we call today the universal operator.

\begin{definition}\cite[p. 213]{Chalendar}
Let $\mathcal{H}$ be a Hilbert space, U a bounded operator on $\mathcal{H}$ and $\mathcal{L}(\mathcal{H})$ the algebra of bounded operators on $\mathcal{H}$. We say U is universal for $\mathcal{H}$ if for each non zero $A\in\mathcal{L}(\mathcal{H})$ there is an invariant subspace M for U and a non zero number $\lambda$ such that $\lambda A$ is similar to $U|_{M}$, that is, there is a linear isomorphism X of $\mathcal{H}$ onto M such that $UX=\lambda XA$.
\end{definition}

The main tool for obtaining universal operators is the following Caradus criterion.

\begin{theorem}\cite[p. 527]{Caradus}
If $\mathcal{H}$ is a separable Hilbert space and $U\in\mathcal{L}(\mathcal{H})$ such that:
\begin{enumerate}
    \item [1.] The null space of U is infinite dimensional.
    \item [2.] The range of U is $\mathcal{H}$,
\end{enumerate}
then $U$ is universal for $\mathcal{H}$.
\end{theorem}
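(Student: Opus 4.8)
The plan is to unwind the definition of universality into an intertwining problem and then solve it by an explicit Neumann-type series. Fix a nonzero $A\in\mathcal{L}(\mathcal{H})$ and set $B=\lambda A$, where the scalar $\lambda\neq 0$ will be chosen small later. It suffices to produce a bounded, bounded-below operator $X\colon\mathcal{H}\to\mathcal{H}$ with $UX=XB$: the range $M=X\mathcal{H}$ is then $U$-invariant, since $UM=XB\mathcal{H}\subseteq X\mathcal{H}=M$; being bounded below, $X$ has closed range and is an isomorphism of $\mathcal{H}$ onto $M$; and $U|_{M}X = XB = X(\lambda A)$ exhibits $\lambda A$ as similar to $U|_{M}$, which is exactly what the definition requires.

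To build $X$ I first extract from the hypotheses two ingredients. Because the range of $U$ is all of $\mathcal{H}$, the restriction $U|_{(\ker U)^{\perp}}\colon (\ker U)^{\perp}\to\mathcal{H}$ is a continuous bijection, so by the open mapping theorem it has a bounded inverse; call it $V$, so that $UV=I$ and $\operatorname{ran}V=(\ker U)^{\perp}$. Because $\ker U$ is infinite dimensional and $\mathcal{H}$ is separable, $\ker U$ is a separable infinite dimensional Hilbert space and hence admits an isometric isomorphism $Y\colon\mathcal{H}\to\ker U$.

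Now I would define $X=\sum_{n=0}^{\infty}V^{n}YB^{n}$. The estimate $\|V^{n}YB^{n}\|\le(\|V\|\,\|B\|)^{n}$ shows the series converges in operator norm as soon as $\|V\|\,\|B\|=|\lambda|\,\|V\|\,\|A\|<1$, which fixes an admissible range of $\lambda$. The intertwining relation is then a short computation: applying $U$ term by term kills the $n=0$ term (since $\operatorname{ran}Y\subseteq\ker U$) and uses $UV^{n}=V^{n-1}$ for $n\ge 1$, so that $UX=\sum_{n\ge1}V^{n-1}YB^{n}=\bigl(\sum_{m\ge0}V^{m}YB^{m}\bigr)B=XB$.

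The delicate point --- the step I expect to be the real obstacle --- is verifying that $X$ is bounded below, since the definition demands an isomorphism and not merely an intertwiner. Here the choice $\operatorname{ran}V=(\ker U)^{\perp}$ pays off: for each $h$ the $n=0$ summand $Yh$ lies in $\ker U$ while the tail $\sum_{n\ge1}V^{n}YB^{n}h$ lies in $\operatorname{ran}V=(\ker U)^{\perp}$, so these two pieces are orthogonal and the Pythagorean identity gives $\|Xh\|^{2}\ge\|Yh\|^{2}=\|h\|^{2}$. Thus $X$ is bounded below (indeed by $1$), completing the construction. I would close by noting that $\lambda$ can always be taken nonzero within the admissible range, so the argument applies to every nonzero $A$ and $U$ is universal.
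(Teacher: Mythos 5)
Your proof is correct, and there is nothing in the paper to compare it against: the paper only quotes Caradus's theorem with a citation and gives no proof. Your argument --- a bounded right inverse $V$ of $U$ chosen with $\operatorname{ran}V=(\ker U)^{\perp}$, an isometry $Y$ of $\mathcal{H}$ onto the infinite dimensional kernel, the Neumann series $X=\sum_{n\ge 0}V^{n}YB^{n}$ for $\|V\|\,\|B\|<1$, and the orthogonality of $Yh$ against the tail to get $\|Xh\|\ge\|h\|$ --- is essentially Caradus's original construction, so you have reproduced the cited proof rather than found a new route.
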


A well-known example of a universal operator on $\mathcal{H}$ is the adjoint of a unilateral shift of infinite multiplicity, which was introduced by Rota. In fact, considering $S^{*}$ acting on
$$
\ell^{2}(\mathcal{H})=\left\{(f_{n})_{n=0}^{\infty}:\sum_{n=0}^{\infty}||f_{n}||^{2}_{\mathcal{H}}<\infty\right\}
$$
by
$$
S^{*}(f_{0},f_{1},f_{2},\cdots)=(f_{1},f_{2},\cdots)
$$
we have that $S^{*}$ satisfies the Caradus criterion and so is an universal operator.  Following this same idea, we obtain other interesting examples of universal operators, namely the adjoints of Toeplitz operators on the Hardy space over the disk induced by analytic symbols.  In this direction, Cowen and Gallardo-Gutiérrez showed the following.

\begin{theorem}\cite[Theorem 5]{Cowen}
Let $\varphi\in H^{\infty}(\mathbb{D})$ such that $1/\varphi\in L^{\infty}(\mathbb{T})$. If the Toeplitz operator $t_{\varphi}^{*}$ has infinite
dimensional kernel, then $t_{\varphi}^{*}$ is universal for $H^{2}(\mathbb{D})$. 
\end{theorem}

For Toeplitz operators on the Hardy space over the polydisk, Ferreira and Noor showed the following.

\begin{theorem}(\cite[Theorem 1]{Ferreira})\label{t1}
Let $\varphi\in H^{\infty}(\mathbb{D}^{n})$ for $n>1$. Then $T^{*}_{\varphi}$ satisfies the Caradus criterion for universality if, and only if, $\varphi$ is invertible in $L^{\infty}(\mathbb{T}^{n})$ but non-invertible in $H^{\infty}(\mathbb{D}^{n})$.
\end{theorem}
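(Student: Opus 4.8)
The plan is to translate each of the two Caradus conditions into a concrete analytic property of $\varphi$ and then prove the resulting equivalences separately. Throughout I identify $T_\varphi$ with multiplication by $\varphi$ on $H^{2}(\mathbb{D}^{n})$ (legitimate because $\varphi\in H^\infty$), so that $\ker T^*_\varphi=(\varphi H^2)^\perp$ and $T^*_\varphi T_\varphi=T_{|\varphi|^2}$.

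First I would analyze the range condition and show that $\operatorname{range}T^*_\varphi=H^2$ is equivalent to invertibility of $\varphi$ in $L^\infty(\mathbb{T}^n)$. By the closed range theorem $T^*_\varphi$ is onto iff $T_\varphi$ is bounded below, and since $\langle T_{|\varphi|^2}f,f\rangle=\int_{\mathbb{T}^n}|\varphi|^2|f|^2\,dm$, this is equivalent to $T_{|\varphi|^2}\geq c^2 I$ for some $c>0$. The implication $|\varphi|\geq\delta$ a.e. $\Rightarrow$ bounded below is immediate. For the converse I would localize with normalized Szegő kernels $\hat k_w$, $w=r\zeta_0$: since $|\hat k_w|^2\,dm$ is the product Poisson kernel, $\int_{\mathbb{T}^n}|\varphi|^2|\hat k_w|^2\,dm$ is the Poisson integral of $|\varphi|^2$ at $w$, which converges to $|\varphi(\zeta_0)|^2$ at a.e. Lebesgue point $\zeta_0$ by Fatou's theorem. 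If $|\varphi|<c$ on a set of positive measure one picks such a $\zeta_0$ and contradicts the lower bound, forcing $|\varphi|\geq c$ a.e.

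The heart of the argument, and where $n>1$ is essential, is the dichotomy: if $\varphi$ is bounded below then $\ker T^*_\varphi$ is either $\{0\}$ or infinite dimensional. I would argue by contradiction, assuming $0<\dim\ker T^*_\varphi=m<\infty$. Each coordinate shift $T_{z_j}$ commutes with $T_\varphi$, so $\varphi H^2$ is $T_{z_j}$-invariant and hence $Q:=\ker T^*_\varphi$ is invariant under the commuting backward shifts $T^*_{z_1},\dots,T^*_{z_n}$. On the finite-dimensional space $Q$ these commuting operators admit a common eigenvector $g\neq 0$, and a direct computation with Taylor coefficients shows that any common eigenvector of the backward shifts lying in $H^2$ is a multiple of a Szegő kernel $k_w$ with $w\in\mathbb{D}^n$ (the summability of $\sum_\alpha|\lambda^\alpha|^2$ forces the eigenvalues to satisfy $|\lambda_j|<1$). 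Then $0=T^*_\varphi k_w=\overline{\varphi(w)}\,k_w$ forces $\varphi(w)=0$. Here I invoke the several-variables fact that for $n\geq 2$ a nonzero holomorphic function has no isolated zeros, so the zero variety $Z(\varphi)$ is infinite; the kernels $\{k_v:v\in Z(\varphi)\}$ are linearly independent and all lie in $Q$, contradicting $\dim Q=m$.

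Finally I would assemble the equivalence. Assume the Caradus criterion holds: the range condition gives $L^\infty(\mathbb{T}^n)$-invertibility by the first step, while an infinite-dimensional kernel forces $\varphi H^2\subsetneq H^2$, and since invertibility of $\varphi$ in $H^\infty$ would give $\varphi H^2=H^2$, the symbol is non-invertible in $H^\infty(\mathbb{D}^n)$. Conversely, assume $\varphi$ is invertible in $L^\infty(\mathbb{T}^n)$ but not in $H^\infty(\mathbb{D}^n)$. The first step makes $T^*_\varphi$ onto and $\varphi H^2$ closed; non-invertibility in $H^\infty$ rules out $\varphi H^2=H^2$ (otherwise $1/\varphi=T_\varphi^{-1}1\in H^2\cap L^\infty=H^\infty$), so $\ker T^*_\varphi\neq\{0\}$, and the dichotomy upgrades this to $\dim\ker T^*_\varphi=\infty$. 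I expect the dichotomy to be the main obstacle, precisely because it is the step where the passage from $n=1$ (finite Blaschke products give finite-dimensional kernels) to $n\geq 2$ alters the geometry of zero sets.
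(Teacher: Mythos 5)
Your proof is correct, but there is nothing in the paper to compare it against: Theorem \ref{t1} is imported verbatim from Ferreira--Noor \cite{Ferreira} and is never proved in this paper, so what you have written is a self-contained proof of an externally cited result. Your route also differs from the citation-based one the paper uses for the closely related Proposition in Section \ref{s1} (the case $n=2$): there, the equivalence between surjectivity of $T^*_\varphi$ (equivalently, closedness of $\varphi H^2$) and invertibility of $\varphi$ in $L^\infty(\mathbb{T}^2)$ is delegated to Koca--Sadik \cite{Koca}, and the fact that a proper subspace of the form $\varphi H^2(\mathbb{D}^2)$ cannot have finite nonzero codimension is delegated to Ahern--Clark \cite{Ahern}. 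You reprove both ingredients from scratch: the Szeg\H{o}-kernel localization (testing the lower bound against $|\hat k_{r\zeta_0}|^2$, which is the product Poisson kernel) handles the range condition, and the common-eigenvector argument --- a finite-dimensional $\ker T^*_\varphi$ is invariant under the commuting backward shifts, any common eigenvector in $H^2$ is a reproducing kernel $k_w$, hence $\varphi(w)=0$, and zeros of holomorphic functions are never isolated when $n\geq 2$, producing infinitely many linearly independent kernels inside $\ker T^*_\varphi$ --- handles the kernel dichotomy. This buys an elementary proof independent of both external references, and it isolates exactly where $n>1$ enters (the geometry of zero varieties), which the paper's presentation leaves implicit. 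Two small points deserve a line each to be fully rigorous: (i) the appeal to ``Fatou's theorem'' on the polydisk should be justified as a.e.\ radial convergence of Poisson integrals of $L^\infty(\mathbb{T}^n)$ functions, which holds since $L^\infty\subset L^2$ and the strong maximal operator is bounded on $L^2$ (the naive one-variable statement does not transfer for free); (ii) the linear independence of the kernels $\{k_v: v\in Z(\varphi)\}$ at distinct points of $\mathbb{D}^n$ follows from polynomial interpolation and should be stated. Neither is a gap; both are routine.
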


In particular the backward shift operators $T^{*}_{z_{1}},\ldots,T^{*}_{z_{n}}$ are universal when $n>1$.


\begin{equi3} 
If $U$ is universal for a separable, infnite dimensional Hilbert space $\mathcal{H}$, then the ISP is equivalent to the assertion that every minimal invariant subspace for $U$ is one dimensional.
\end{equi3}



In this work we will use this statement to obtain sufficient conditions for the ISP. More precisely, we will obtain conditions on $\varphi\in H^{\infty}(\mathbb{D})$ to show that $T^{*}_{\varphi}$ on $H^{2}(\mathbb{D}^{2})$ is an universal operator and that no nontrivial invariant subspace of $T^{*}_{\varphi}$ is minimal.

The purpose of this paper is the following. In Section \ref{s1}, we collect some of the preliminaries. In Section \ref{s2}, we introduce the concept of translation generalized inner function. In Section \ref{s3}, we provide sufficient conditions for ISP to be true (Theorem \ref{t6}). For that, we need two result sets (Theorems \ref{t7}, \ref{t8} and \ref{t9}). In Theorems \ref{t7} and \ref{t8}, we consider $\varphi\in H^{\infty}(\mathbb{D})$ and $M\subset H^2(\mathbb{D}^2)$ a $T_{\varphi}^{*}$-invariant subspace and we obtain conditions such that $T_{\varphi}^{*}|_M$ has a proper invariant subspace. In Theorem \ref{t9}, we introduce the operator $J_{g,\varphi}$ so that $M$ does not have a nontrivial invariant subspace.

\section{Preliminaries}\label{s1}

\subsection{The Hardy space $H^{2}(\mathbb{D}^{2})$}

Let $\mathbb{D}$ be the unit disk in the complex plane $\mathbb{C}$ and $\mathbb{T}=\partial\mathbb{D}$ its boundary. The bidisk $\mathbb{D}^{2}$ and 2-torus $\mathbb{T}^{2}$ are the Cartesian product of 2 copies of $\mathbb{D}$ and $\mathbb{T}$ respectively. Let $L^{2}(\mathbb{T}^{2})$ denote the usual Lebesgue space and $L^{\infty}(\mathbb{T}^{2})$ the essentially bounded functions with respect the normalized Haar measure $\sigma$.

The Hardy space on the bidisk $H^{2}(\mathbb{D}^{2})$ is defined as the class of all holomorphic functions $f\in\mathbb{D}^{2}$ for which
\begin{equation*}
\|f\|^{2}=\sup_{0<r<1}\int_{\mathbb{T}^{2}}|f(r\zeta)|^{2}d\sigma(\zeta)<\infty.    
\end{equation*}

It is well known that if $f\in H^{2}(\mathbb{D}^{2})$, then the radial limit
$$
f^{*}(\zeta)=\lim_{r\rightarrow1}f(r\zeta)
$$
exists for almost all $\zeta\in\mathbb{T}^{2}$ and
$$
\lim_{r\rightarrow1}\int_{\mathbb{T}^{2}}|f_{r}-f^{*}|^{2}d\sigma=0,
$$
where  $f_{r}(\zeta)=f(r\zeta)$ for all $\zeta\in\mathbb{T}^{2}$. Thus, $H^{2}(\mathbb{D}^{2})$ can be viewed a bounded subspace of $L^{2}(\mathbb{T}^{2})$.

Denote by $H^{\infty}(\mathbb{D}^{2})$ the space of bounded analytic functions on $\mathbb{D}^{2}$. Also via radial limits, $H^{\infty}(\mathbb{D}^{2})$ can be seen as a subspace of $L^{\infty}(\mathbb{T}^{2})$. An inner function in $\mathbb{D}^{2}$ is a function $f\in H^{\infty}(\mathbb{D}^{2})$ such that $|f^{*}|=1$ a.e. on $\mathbb{T}^{2}$.

Let $P$ denote the orthogonal projection from $L^{2}(\mathbb{T}^{2})$ onto $H^{2}(\mathbb{D}^{2})$. For a function $\varphi\in L^{\infty}(\mathbb{T}^{2})$, the Toeplitz operator $T_{\varphi}$ with symbol $\varphi$ is defined by
$$
T_{\varphi}f=P(\varphi f)
$$
for $f\in H^{2}(\mathbb{D}^{2})$. Then $T_{\varphi}$ is a bounded linear operator on $H^{2}(\mathbb{D}^{2})$ and its adjoint is given by $T^{*}_{\varphi}=T_{\overline{\varphi}}$. Similarly, for a given $\varphi\in L^{\infty}(\mathbb{T})$, the 1-dimensional Toeplitz operator $t_{\varphi}$ with symbol $\varphi$ is the bounded linear operator on $H^{2}(\mathbb{D})$ defined by
$$
t_{\varphi}f=Q(\varphi f)
$$
for $f\in H^{2}(\mathbb{D})$, where $Q$ is the orthogonal projection from $L^{2}(\mathbb{D})$ onto $H^{2}(\mathbb{D})$. 

In this work, we will often decompose the space $H^{2}(\mathbb{D}^{2})$ as follows. Let $H^{2}(z)$ and $H^{2}(w)$ denote the classical Hardy spaces over $\mathbb{D}$ in the variables $z$ and $w$ respectively. Then $H^{2}(\mathbb{D}^{2})$ may be defined as the $H^{2}(z)$-valued Hardy space
\begin{equation*}
H^{2}(\mathbb{D}^{2})=\left\{g(z,w)=\sum_{n=0}^{\infty}g_{n}(z)w^{n}:\sum_{n=0}^{\infty}\|g_{n}\|^{2}_{H^{2}(z)}<\infty\right\}.    
\end{equation*}

Thus, considering $H_{n}=H^{2}(z)w^{n}$ for each $n\in\mathbb{N}$, we have
$$
H^{2}(\mathbb{D}^{2})=\bigoplus_{n=0}^{\infty}H_{n}.
$$

For each $n\in\mathbb{N}$, we denote by $P_{n}$ the orthogonal projection of $H^{2}(\mathbb{D}^{2})$ onto $H_{n}$.

\subsection{Invariant subspaces of $H^{2}(\mathbb{D}^{2})$}

Let $\varphi\in H^{\infty}(\mathbb{D}^{2})$. A closed subspace $M\subset H^{2}(\mathbb{D}^{2})$ is said to be $T_{\varphi}$-invariant if $\varphi M\subset M$. Beurling's theorem states that every nontrivial invariant subspace of $t_{z}$ is of the form $M=\varphi H^{2}(\mathbb{D})$, where $\varphi$ is an inner function in $\mathbb{D}$. Thus $M$ is a cyclic subspace, i.e.
$$
M=\overline{\operatorname{span}\{(t_{z})^{n}\varphi:n\in\mathbb{N}\}}.
$$

Note that Beurling's theorem cannot be naturally extended to multivariable functions. In fact, considering the polynomial ring $\mathcal{R}=\mathbb{C}[z,w]$, Rudin \cite{Rudin} observed that the invariant subspace
$$
[z-w]:=\overline{\{(z-w)p:p\in\mathcal{R}\}}
$$
is not the form $\varphi H^{2}(\mathbb{D}^{2})$ for any inner function $\varphi\in H^{\infty}(\mathbb{D}^{2})$.

Another property that cannot be transferred when working with invariant subspaces and universality on the Hardy space over the polydisk is as follows. $zH^{2}(\mathbb{D})$ is a nontrivial invariant subspace of $t_{z}$ but $t^{*}_{z}$ is not an universal operator for $H^{2}(\mathbb{D})$ since $\operatorname{dim}\operatorname{ker}t_{\overline{z}}=1$. However, over $H^{2}(\mathbb{D}^{2})$ we have the following:

\begin{proposition}
Let $\varphi\in H^{\infty}(\mathbb{D}^{2})$. Then $T^{*}_{\varphi}$ satisfies the Caradus criterion for universality if, and only if, $\varphi H^{2}(\mathbb{D}^{2})$ is a nontrivial invariant subspace of $H^{2}(\mathbb{D}^{2})$.
\end{proposition}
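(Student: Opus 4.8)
The plan is to route the whole statement through the Ferreira--Noor characterization (Theorem \ref{t1}): for $\varphi\in H^{\infty}(\mathbb{D}^{2})$, the adjoint $T^{*}_{\varphi}=T_{\overline{\varphi}}$ satisfies the Caradus criterion if and only if $\varphi$ is invertible in $L^{\infty}(\mathbb{T}^{2})$ but non-invertible in $H^{\infty}(\mathbb{D}^{2})$. It therefore suffices to prove that $\varphi H^{2}(\mathbb{D}^{2})$ is a nontrivial invariant subspace precisely when these two invertibility conditions hold. First I would record the elementary observation that $\varphi H^{2}(\mathbb{D}^{2})$ is always a linear manifold invariant under $T_{z}$ and $T_{w}$, since $z\varphi f=\varphi(zf)$ and likewise in $w$; hence calling it a \emph{nontrivial invariant subspace} amounts to the three requirements that it be closed, nonzero, and proper. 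As $\varphi H^{2}(\mathbb{D}^{2})=\{0\}$ exactly when $\varphi=0$, and $L^{\infty}$-invertibility forces $\varphi\neq 0$, I dispose of the case $\varphi=0$ at once (it fails both sides) and assume $\varphi\neq 0$, so that $T_{\varphi}$ is the injective multiplication operator $M_{\varphi}$ with $\operatorname{ran}T_{\varphi}=\varphi H^{2}(\mathbb{D}^{2})$.

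The core of the argument is the equivalence between closedness of $\varphi H^{2}(\mathbb{D}^{2})$ and $L^{\infty}$-invertibility of $\varphi$. Since $T_{\varphi}$ is injective, its range is closed iff $T_{\varphi}$ is bounded below, so I would establish
\[
T_{\varphi}\ \text{bounded below}\iff \operatorname{ess\,inf}_{\mathbb{T}^{2}}|\varphi|>0\iff 1/\varphi\in L^{\infty}(\mathbb{T}^{2}).
\]
The forward (easy) direction is immediate from $\|T_{\varphi}f\|^{2}=\int_{\mathbb{T}^{2}}|\varphi|^{2}|f|^{2}\,d\sigma\geq\delta^{2}\|f\|^{2}$ once $|\varphi|\geq\delta$ a.e.

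For the reverse direction, which I expect to be the main obstacle, set $\psi=|\varphi|^{2}$, so that $T_{\psi}\geq 0$ and $\langle T_{\psi}f,f\rangle=\|T_{\varphi}f\|^{2}$; thus $T_{\varphi}$ is bounded below iff the bottom of the spectrum of the positive self-adjoint operator $T_{\psi}$ is strictly positive. Writing $\widehat{k}_{\lambda}$ for the normalized reproducing kernel of $H^{2}(\mathbb{D}^{2})$ at $\lambda\in\mathbb{D}^{2}$, one computes $\langle T_{\psi}\widehat{k}_{\lambda},\widehat{k}_{\lambda}\rangle=\int_{\mathbb{T}^{2}}\psi\,|\widehat{k}_{\lambda}|^{2}\,d\sigma$, and $|\widehat{k}_{\lambda}|^{2}$ is exactly the product of the Poisson kernels at the coordinates of $\lambda$; hence this quantity is the Poisson integral of $\psi$ evaluated at $\lambda$. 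By Fatou's theorem it converges to $\psi(\zeta)$ as $\lambda\to\zeta$ nontangentially for a.e.\ $\zeta\in\mathbb{T}^{2}$. If $\operatorname{ess\,inf}|\varphi|=0$, there are Lebesgue points $\zeta$ with $\psi(\zeta)$ arbitrarily small, which forces $\inf_{\|f\|=1}\langle T_{\psi}f,f\rangle=0$ and hence $\min\operatorname{spec}T_{\psi}=0$; thus $T_{\varphi}$ is not bounded below, proving the contrapositive. Everything around this Berezin/Poisson-kernel estimate is routine operator theory.

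Finally I would treat properness. Assuming $\varphi$ is $L^{\infty}$-invertible (so $\varphi H^{2}(\mathbb{D}^{2})$ is already closed), if $\varphi H^{2}(\mathbb{D}^{2})=H^{2}(\mathbb{D}^{2})$ then $1=\varphi g$ for some $g\in H^{2}(\mathbb{D}^{2})$, whence $1/\varphi=g\in H^{2}(\mathbb{D}^{2})\cap L^{\infty}(\mathbb{T}^{2})=H^{\infty}(\mathbb{D}^{2})$, i.e.\ $\varphi$ is $H^{\infty}$-invertible; the converse implication is immediate. Combining the three pieces: for $\varphi\neq 0$, the space $\varphi H^{2}(\mathbb{D}^{2})$ is closed, nonzero and proper if and only if $\varphi$ is invertible in $L^{\infty}(\mathbb{T}^{2})$ and non-invertible in $H^{\infty}(\mathbb{D}^{2})$, which by Theorem \ref{t1} is exactly the Caradus criterion for $T^{*}_{\varphi}$. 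This closes the equivalence in both directions.
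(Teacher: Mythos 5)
Your proposal is correct, and it reaches the result along the same logical skeleton as the paper (both directions are funneled through Theorem \ref{t1}, via the two equivalences ``$\varphi H^{2}(\mathbb{D}^{2})$ closed $\Leftrightarrow$ $1/\varphi\in L^{\infty}(\mathbb{T}^{2})$'' and ``$\varphi H^{2}(\mathbb{D}^{2})$ proper $\Leftrightarrow$ $\varphi$ non-invertible in $H^{\infty}(\mathbb{D}^{2})$''), but the technical content is genuinely different: where the paper simply cites the literature, you supply proofs. The paper obtains the closedness equivalence by invoking Koca--Sadik \cite[Thm.~2]{Koca}, and gets properness in the forward direction from Ahern--Clark \cite{Ahern}, using that $\operatorname{Ker}(T^{*}_{\varphi})=[\varphi H^{2}(\mathbb{D}^{2})]^{\perp}$ is infinite dimensional under the Caradus hypotheses. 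You instead prove the closedness equivalence from scratch --- injectivity of $T_{\varphi}$ plus the bounded-below criterion, with the hard implication done by evaluating the Berezin transform of $T_{|\varphi|^{2}}$ at normalized reproducing kernels and letting Fatou's theorem drive it down to $\operatorname{ess\,inf}|\varphi|^{2}$ --- and you prove properness by the elementary observation that $1=\varphi g$ with $g\in H^{2}(\mathbb{D}^{2})$ and $1/\varphi\in L^{\infty}(\mathbb{T}^{2})$ forces $g\in H^{2}(\mathbb{D}^{2})\cap L^{\infty}(\mathbb{T}^{2})=H^{\infty}(\mathbb{D}^{2})$. What your route buys is self-containment (you effectively reprove the half of the Koca--Sadik theorem that is needed, and you avoid Ahern--Clark altogether) plus explicit treatment of the degenerate case $\varphi=0$, which the paper glosses over; what the paper's route buys is brevity. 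Two small points to make your write-up airtight: on the bidisk, Fatou-type a.e.\ convergence of Poisson integrals should be stated for \emph{restricted} nontangential approach (this is harmless here since $\psi=|\varphi|^{2}\in L^{\infty}(\mathbb{T}^{2})$, where the theorem holds, even though it fails for general $L^{1}$ data), and the identification $H^{2}(\mathbb{D}^{2})\cap L^{\infty}(\mathbb{T}^{2})=H^{\infty}(\mathbb{D}^{2})$ uses that $H^{2}(\mathbb{D}^{2})$ functions are Poisson integrals of their boundary functions; both are standard facts from Rudin \cite{Rudin} and worth a citation rather than a proof.
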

\begin{proof}
Since $\varphi\in H^{\infty}(\mathbb{D}^{2})$ and $T^{*}_{\varphi}$ is surjective, we have $1/\varphi\in L^{\infty}(\mathbb{T}^{2})$. Hence $\varphi H^{2}(\mathbb{D}^{2})$ is an invariant subspace of $H^{2}(\mathbb{D}^{2})$ by \cite[Thm. 2]{Koca}. On the other hand, since $\operatorname{Ker}(T^{*}_{\varphi})=[\varphi H^{2}(\mathbb{D}^{2})]^{\perp}$ has infinite dimension, we have by Ahern and Clark \cite[p. 969]{Ahern} that $\varphi H^{2}(\mathbb{D}^{2})\neq H^{2}(\mathbb{D}^{2})$. Conversely, if $\varphi H^{2}(\mathbb{D}^{2})$ is an invariant subspace of $H^{2}(\mathbb{D}^{2})$ again by \cite[Thm. 2]{Koca} we have that $1/\varphi\in L^{\infty}(\mathbb{T}^{2})$. Now since
$$
1/\varphi\in H^{\infty}(\mathbb{D}^{2})\Rightarrow\varphi H^{2}(\mathbb{D}^{2})=H^{2}(\mathbb{D}^{2})
$$
and $\varphi H^{2}(\mathbb{D}^{2})$ is a nontrivial invariant subspace follow that $\varphi$ is non-invertible in $H^{\infty}(\mathbb{D}^{2})$ and so $T^{*}_{\varphi}$ satisfies the Caradus criterion by Theorem \ref{t1}.
\end{proof}

We end this section with the following notation. Given $\varphi\in H^{\infty}(\mathbb{D})$ and $g\in H^{2}(\mathbb{D}^{2})$, the minimal closed invariant subspace for $T^{*}_{\varphi}$ that contains $g$ will be denoted by
$$
V_{T^{*}_{\varphi},g}:=\overline{\operatorname{span}\{(T^{*}_{\varphi})^{n}g:n\in\mathbb{N}\}}.
$$

\section{Universal translations of $T^{*}_{\varphi}$}\label{s2}

In general, not every translation of an universal operator is an universal operator. However, Cowen and Gallardo-Gutiérrez \cite[Thm. 2]{Cowen} showed that if $U\in\mathcal{L}(\mathcal{H})$ satisfies the Caradus criterion then there is $\epsilon>0$ so that for $|\mu|<\epsilon$, the operator $U+\mu I$ is universal. Particularly for Toeplitz operators over Hardy space $H^{2}(\mathbb{D}^{2})$, let's introduce the concept of a symbol $\varphi$ such that $T_{\varphi}^{*}$ is not universal but $T_{\varphi}^{*}+\lambda I$ is universal for some $\lambda\in\mathbb{C}$.

\begin{definition}
We say that $\varphi\in H^{\infty}(\mathbb{D}^{2})$ is a translation generalized inner function when there are $z_{0}\in\mathbb{D}^{2}$ and $\delta>0$ such that
$$
|\varphi^{*}(z)-\varphi(z_{0})|>\delta \ \text{a.e.}\  z\in\mathbb{T}^{2}.
$$
\end{definition}

\begin{example}
It follows from \cite[Thm. 2.2.10]{Martinez} that every inner function $\varphi\in H^{\infty}(\mathbb{D})$ is a translation generalized inner function. 
\end{example}


The following fact will be useful in Theorem \ref{t6}.

\begin{proposition}\label{t11}
If $\varphi\in H^{2}(z)$ is an inner function, then $T^{*}_{\varphi}$ has an universal translation for $H^{2}(\mathbb{D}^{2})$. 
\end{proposition}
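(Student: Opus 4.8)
The plan is to produce a scalar $\lambda$ for which $T^{*}_{\varphi}+\lambda I$ satisfies the Caradus criterion, and then to read off universality from the characterization in Theorem \ref{t1}. The starting point is the algebraic identity obtained from $T^{*}_{\varphi}=T_{\overline{\varphi}}$, from $\lambda I=T_{\lambda}$ for the constant symbol $\lambda$, and from linearity of the symbol map together with $T_{\overline{\eta}}=T^{*}_{\eta}$:
\[
T^{*}_{\varphi}+\lambda I=T_{\overline{\varphi}}+T_{\lambda}=T_{\overline{\varphi}+\lambda}=T^{*}_{\varphi+\overline{\lambda}}.
\]
Writing $\psi=\varphi+\overline{\lambda}$, the assertion that $T^{*}_{\varphi}$ admits a universal translation reduces to finding $\lambda$ so that $\psi$ is invertible in $L^{\infty}(\mathbb{T}^{2})$ but non-invertible in $H^{\infty}(\mathbb{D}^{2})$, since by Theorem \ref{t1} this is precisely the condition for $T^{*}_{\psi}$ to meet the Caradus criterion.

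The choice of $\lambda$ is supplied by the translation generalized inner function structure. First I would invoke the preceding example: since $\varphi$ is inner, it is a translation generalized inner function, so there exist $z_{0}\in\mathbb{D}^{2}$ and $\delta>0$ with $|\varphi^{*}(z)-\varphi(z_{0})|>\delta$ for a.e. $z\in\mathbb{T}^{2}$. I then set $\lambda=-\overline{\varphi(z_{0})}$, so that $\psi=\varphi-\varphi(z_{0})$. Two verifications remain. For invertibility in $L^{\infty}(\mathbb{T}^{2})$, the defining inequality gives $|\psi^{*}(z)|=|\varphi^{*}(z)-\varphi(z_{0})|>\delta$ a.e., so $\psi$ is bounded away from $0$ on $\mathbb{T}^{2}$ and $1/\psi\in L^{\infty}(\mathbb{T}^{2})$. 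For non-invertibility in $H^{\infty}(\mathbb{D}^{2})$, observe that $\psi(z_{0})=\varphi(z_{0})-\varphi(z_{0})=0$, and since $z_{0}$ lies in the open bidisk, $\psi$ has an interior zero, whence $1/\psi\notin H^{\infty}(\mathbb{D}^{2})$. Applying Theorem \ref{t1} shows that $T^{*}_{\psi}=T^{*}_{\varphi}+\lambda I$ satisfies the Caradus criterion, and the Caradus theorem quoted at the start then yields that this operator is universal.

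There is no serious obstacle here; the substance of the argument is that a single $\lambda$ reconciles two requirements that pull in opposite directions—$\psi$ must stay bounded away from $0$ on $\mathbb{T}^{2}$ yet vanish somewhere inside $\mathbb{D}^{2}$. The translation generalized inner function hypothesis is exactly what makes this possible: the boundary values $\varphi^{*}$ lie on the unit circle, so subtracting the interior value $\varphi(z_{0})$—which has modulus strictly less than $1$ because $\varphi$ is a nonconstant inner function—cannot bring $\psi$ near $0$ on $\mathbb{T}^{2}$, while that same value is by construction attained at the interior point $z_{0}$, forcing the required zero. I would close by noting that choosing $z_{0}$ with $\varphi(z_{0})\neq 0$ produces a genuinely nonzero universal translation, so the statement is not vacuous at the level of the translation parameter.
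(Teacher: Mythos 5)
Your proof is correct and follows essentially the same route as the paper: invoke the translation generalized inner function property of the (nonconstant) inner function $\varphi$ to get $z_{0}$ and $\delta$, set $\psi=\varphi-\varphi(z_{0})$, and apply Theorem \ref{t1} to conclude that $T^{*}_{\psi}=T^{*}_{\varphi}-\overline{\varphi(z_{0})}\,I$ is universal. The only difference is that you spell out the two hypotheses of Theorem \ref{t1} (boundedness away from zero on $\mathbb{T}^{2}$ and the interior zero at $z_{0}$), which the paper leaves implicit.
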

\begin{proof}
Since $\varphi$ is an inner function on $\mathbb{D}$, we have there are $z_{0}\in\mathbb{D}$ and $\delta>0$ such that
$$
|\varphi^{*}(z)-\varphi(z_{0})|>\delta \ \text{a.e.}\  z\in\mathbb{T}.
$$ 
Thus, considering $\psi(z)=\varphi(z)-\varphi(z_{0})$, it follows from Theorem \ref{t1} that $T^{*}_{\psi}=T^{*}_{\varphi}-\overline{\varphi({z_{0})}}I$ is an universal operator for $H^{2}(\mathbb{D}^{2})$.
\end{proof}

\section{Sufficient conditions for the ISP}\label{s3}

We begin this section by showing that each $H_n$ is a reducing subspace of $T_{\varphi}$.

\begin{proposition}\label{t3}
Let $\varphi \in H^{\infty}(z)$. Then $T_{\varphi}(fw^n)=(t_{\varphi}f)w^n$ and $ T_{\varphi}^{*}(fw^n)=(t_{\varphi}^{*}f)w^n$ for all $f\in H^{2}(z)$ and $n\in \mathbb{N}$. 
\end{proposition}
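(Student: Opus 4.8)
The plan is to establish the first identity by a direct computation and then deduce the adjoint identity from it using the orthogonality of the subspaces $H_{n}$, rather than computing the projection $P$ of the non-analytic function $\overline{\varphi}fw^{n}$ by hand.

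First I would treat $T_{\varphi}(fw^{n})$. Since $\varphi\in H^{\infty}(z)$ and $f\in H^{2}(z)$, the product $\varphi f$ lies in $H^{2}(z)$, so $t_{\varphi}f=Q(\varphi f)=\varphi f$; in particular $\varphi f w^{n}\in H_{n}\subset H^{2}(\mathbb{D}^{2})$ is already analytic. Hence $P$ acts as the identity on it and
$$
T_{\varphi}(fw^{n})=P(\varphi f w^{n})=\varphi f w^{n}=(t_{\varphi}f)w^{n}.
$$
In particular each $H_{n}$ is $T_{\varphi}$-invariant.

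For the adjoint identity I would argue by duality. Fix $f\in H^{2}(z)$ and $n\in\mathbb{N}$, and test $T_{\varphi}^{*}(fw^{n})$ against the total family $\{hw^{m}:h\in H^{2}(z),\ m\in\mathbb{N}\}$, which spans $H^{2}(\mathbb{D}^{2})$. Using $T_{\varphi}^{*}=T_{\overline{\varphi}}$ together with the first identity and the factorization of the inner product $\langle fw^{n},hw^{m}\rangle=\langle f,h\rangle_{H^{2}(z)}\,\delta_{nm}$, one gets
$$
\langle T_{\varphi}^{*}(fw^{n}),hw^{m}\rangle=\langle fw^{n},(t_{\varphi}h)w^{m}\rangle=\delta_{nm}\langle f,t_{\varphi}h\rangle=\delta_{nm}\langle t_{\varphi}^{*}f,h\rangle=\langle (t_{\varphi}^{*}f)w^{n},hw^{m}\rangle.
$$
Since this holds for every $hw^{m}$ in a spanning set, I would conclude $T_{\varphi}^{*}(fw^{n})=(t_{\varphi}^{*}f)w^{n}$, which also shows that each $H_{n}$ reduces $T_{\varphi}$.

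The only delicate point is the inner-product factorization $\langle fw^{n},hw^{m}\rangle=\langle f,h\rangle\,\delta_{nm}$, i.e. that the decomposition $H^{2}(\mathbb{D}^{2})=\bigoplus_{n}H_{n}$ is genuinely orthogonal with the $z$- and $w$-variables separating. This is exactly the tensor-product structure $H^{2}(\mathbb{D}^{2})=H^{2}(z)\otimes H^{2}(w)$ recorded in Section \ref{s1}, so no serious obstacle remains; the alternative of computing $P(\overline{\varphi}fw^{n})$ directly would instead require expanding $\overline{\varphi}f$ in a Fourier series in $z$ and observing that the factor $w^{n}$ with $n\geq 0$ survives the projection, yielding the same answer.
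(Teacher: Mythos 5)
Your proposal is correct and follows essentially the same route as the paper: the first identity is verified by the same direct computation (the product $\varphi f w^{n}$ is already analytic, so the projection acts as the identity), and the adjoint identity is obtained by the same duality argument, the only cosmetic difference being that you test $T_{\varphi}^{*}(fw^{n})$ against the spanning family $\{hw^{m}\}$ while the paper pairs $T_{\varphi}g$ with $fw^{n}$ for a general $g=\sum_{m}g_{m}w^{m}$. Both hinge on the orthogonality $\langle fw^{n},hw^{m}\rangle=\langle f,h\rangle\,\delta_{nm}$ of the decomposition $H^{2}(\mathbb{D}^{2})=\bigoplus_{n}H_{n}$, so the two proofs are interchangeable.
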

\begin{proof}
Let $f\in H^{2}(z)$ and $n\in \mathbb{N}$. Since $\varphi \in H^{\infty}(z)$, we have 
$$
T_{\varphi}(fw^n)=\varphi f w^n=(t_{\varphi}f)w^n.
$$
Now, for $g(z,w)=\sum_{n=0}^{\infty}g_n(z)w^n\in H^2(\mathbb{D}^2)$ it follows that
$$
\langle T_{\varphi}g,fw^n\rangle=\langle \varphi g_n,f\rangle=\langle g_n,t_{\varphi}^{*}f\rangle=\langle g,(t_{\varphi}^{*}f)w^n\rangle,
$$
as desired.
\end{proof}

In fact, a bit more is true:
\begin{remark}\label{t2}
If $\varphi\in H^{\infty}(z)$, then 
$$
T_{\varphi}g=\sum_{n=0}^{\infty}(t_{\varphi}g_n)w^n \  \text{and} \ T_{\varphi}^{*}g=\sum_{n=0}^{\infty}(t_{\varphi}^{*}g_n)w^n
$$
for all $g(z,w)=\sum_{n=0}^{\infty}g_{n}(z)w^n\in H^2(\mathbb{D}^2)$.
\end{remark}

Let $\varphi\in H^{\infty}(\mathbb{D})$ and $M\subset H^2(\mathbb{D}^2)$ be a $T_{\varphi}^{*}$-invariant subspace. Our initial goal is to obtain conditions such that $T^{*}_{\varphi}|_{M}$ has a nontrivial invariant subspace. This will be provided with Theorems \ref{t7}, \ref{t8} and \ref{t9}.

\begin{lemma}\label{t5}
If $\varphi\in H^{\infty}(z)$, then 
$$
P_n T_{\varphi}^{*}=T_{\varphi}^{*} P_n
$$
for all $n\in\mathbb{N}$.
\end{lemma}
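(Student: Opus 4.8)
The plan is to reduce the identity to the block-diagonal description of $T_\varphi^*$ already recorded in Remark \ref{t2}. The key observation is that, with respect to the orthogonal decomposition $H^2(\mathbb{D}^2) = \bigoplus_{n=0}^\infty H_n$, the operator $T_\varphi^*$ acts diagonally: it carries each summand $H_n = H^2(z)w^n$ into itself, sending $g_n(z)w^n$ to $(t_\varphi^* g_n)(z)\,w^n$. Since $P_n$ is precisely the projection extracting the $n$-th summand, commutation should follow simply by evaluating both composites on a general element and comparing.

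Concretely, I would take an arbitrary $g(z,w) = \sum_{k=0}^\infty g_k(z)w^k \in H^2(\mathbb{D}^2)$. Applying $P_n$ first gives $P_n g = g_n w^n$, and then Proposition \ref{t3} yields $T_\varphi^* P_n g = (t_\varphi^* g_n)w^n$. Applying $T_\varphi^*$ first, Remark \ref{t2} gives $T_\varphi^* g = \sum_{k=0}^\infty (t_\varphi^* g_k)w^k$; since each term $(t_\varphi^* g_k)w^k$ lies in $H_k$, projecting onto $H_n$ annihilates all but the $k=n$ term, so $P_n T_\varphi^* g = (t_\varphi^* g_n)w^n$. The two expressions coincide for every $g$, which is exactly the asserted identity.

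Alternatively, one may argue abstractly: Proposition \ref{t3} shows that both $H_n$ and its orthogonal complement $\bigoplus_{k\neq n} H_k$ are invariant under $T_\varphi^*$, so $H_n$ is a reducing subspace for $T_\varphi^*$, and it is a standard fact that an operator commutes with the orthogonal projection onto any of its reducing subspaces.

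There is no serious obstacle here; the content is entirely carried by the block-diagonal formula of Remark \ref{t2}. The only point deserving a word of care is the interchange of the (infinite) sum with the projection $P_n$, but this is immediate because $P_n$ is bounded and the partial sums of $\sum_k (t_\varphi^* g_k)w^k$ converge in norm in $H^2(\mathbb{D}^2)$.
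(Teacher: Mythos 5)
Your proof is correct and follows essentially the same route as the paper: evaluate both $P_n T_\varphi^*$ and $T_\varphi^* P_n$ on a general element $g=\sum_k g_k(z)w^k$, using Remark \ref{t2} for the first composite and Proposition \ref{t3} for the second, and observe that both yield $(t_\varphi^* g_n)w^n$. Your added remarks (the reducing-subspace viewpoint and the norm-convergence justification for pushing $P_n$ through the sum) are fine but not needed beyond what the paper does.
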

\begin{proof}
In fact, for $g(z,w)=\sum_{m=0}^{\infty}g_m(z)w^m\in H^2(\mathbb{D}^2)$, we have by Remark \ref{t2} and Proposition \ref{t3}, respectively, that
$$
P_n T_{\varphi}^{*}(g)=P_n\left(\sum_{m=0}^{\infty} (t_{\varphi}^{*}g_m)w^m\right)= (t_{\varphi}^{*}g_n)w^n
$$
and
$$
T_{\varphi}^{*} P_n(g)= T_{\varphi}^{*}(g_nw^n)=(t_{\varphi}^{*}g_n)w^n.
$$
\end{proof}

\begin{lemma}\label{t4}
Let $\varphi\in H^{\infty}(z)$. If $M\subset H^2(\mathbb{D}^2)$ is an invariant subspace of $T_{\varphi}^{*}$, then there is an invariant subspace $V\subset H^2(\mathbb{D})$ of $t_{\varphi}^{*}$ such that $P_n(M)$ is dense in $Vw^n$.
\end{lemma}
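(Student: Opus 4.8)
The plan is to work with one value of $n$ at a time and to transport everything to the single-variable space $H^2(z)$ via the isometric isomorphism $U_n\colon H^2(z)\to H_n$ given by $U_n f=fw^n$. By Proposition \ref{t3} this map intertwines the two operators, in the sense that $T_\varphi^*U_n=U_n t_\varphi^*$, so $U_n$ identifies $T_\varphi^*|_{H_n}$ with $t_\varphi^*$. Since $\overline{P_n(M)}$ is a closed subspace of $H_n$, its preimage $V:=U_n^{-1}\big(\overline{P_n(M)}\big)=\{f\in H^2(z):fw^n\in\overline{P_n(M)}\}$ is a closed subspace of $H^2(z)$ with $Vw^n=\overline{P_n(M)}$; in particular $P_n(M)$ is dense in $Vw^n$ by construction. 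The whole content of the lemma is therefore that $V$ is $t_\varphi^*$-invariant.

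To see this I would first show that the (not necessarily closed) subspace $P_n(M)$ is itself $T_\varphi^*$-invariant. Every element of $P_n(M)$ has the form $P_n(g)$ for some $g\in M$, and Lemma \ref{t5} together with the invariance of $M$ gives
$$
T_\varphi^*P_n(g)=P_n(T_\varphi^*g)\in P_n(M),
$$
since $T_\varphi^*g\in M$. Hence $T_\varphi^*\big(P_n(M)\big)\subseteq P_n(M)$. Because $T_\varphi^*$ is bounded, it maps the closure $\overline{P_n(M)}$ into itself, so $\overline{P_n(M)}$ is a closed $T_\varphi^*$-invariant subspace of $H_n$. Transporting this back through $U_n$ and using $T_\varphi^*U_n=U_n t_\varphi^*$ then yields $t_\varphi^*V\subseteq V$, which is exactly the asserted invariance.

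The only delicate point is that $P_n(M)$ need not be closed, so one cannot claim $Vw^n=P_n(M)$ but only density; this is why the statement is phrased in terms of density rather than equality. Handling this correctly — establishing invariance at the level of the raw subspace $P_n(M)$ and then promoting it to the closure by continuity of $T_\varphi^*$ — is the main (and essentially the only) obstacle, and the commutation relation of Lemma \ref{t5} is precisely what makes the first step go through. I would also remark that the subspace $V$ produced in this way generally depends on $n$, so the statement is to be read for each fixed $n$.
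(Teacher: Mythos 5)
Your proof is correct and follows essentially the same route as the paper's: both arguments rest on the commutation relation of Lemma \ref{t5} (equivalently, the intertwining $T_\varphi^*U_n=U_n t_\varphi^*$ from Proposition \ref{t3}) to show that $P_n(M)$ is carried into itself, and then pass to the closure to get the invariant subspace $V$. If anything, your handling of the closure is slightly more careful than the paper's, which works with the preimage of the raw set $P_n(M)$ and asserts it is closed, whereas you take the preimage of $\overline{P_n(M)}$ and promote invariance by continuity of $T_\varphi^*$.
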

\begin{proof}
Consider $V_0=\{g \in H^{2}(\mathbb{D}): gw^n \in P_n(M)\}$. It is easy to see that $V_{0}$ is a closed subspace of $H^{2}(\mathbb{D})$. If $g\in V_{0}$, then there is $f\in M$ such that $gw^{n}=P_{n}f$. Thus
$$
(t^{*}_{\varphi}g)w^{n}=(t^{*}_{\varphi}P_{n})f=P_{n}(t^{*}_{\varphi}f).
$$
Since $M$ is $T^{*}_{\varphi}$-invariant, we have $(t_{\varphi}^{*}g)w^n\in P_n(M)$ and therefore $V_{0}$ is $t_{\varphi}^{*}$-invariant. Considering $V=\overline{V_0}$ we have that $V$ is $t_{\varphi}^{*}$-invariant and $P_n(M)$ is dense in $Vw^n$.
\end{proof}

The first case in which we obtain a nontrivial invariant subspace of $T_{\varphi}^{*}|_M$ is when one of the $P_{n}(M)$ has finite dimension.

\begin{theorem}\label{t7}
Let $\varphi\in H^{\infty}(\mathbb{D})$ and $M\subset H^2(\mathbb{D}^2)$ be a $T_{\varphi}^{*}$-invariant subspace. If there exists $n\in \mathbb{N}$ such that $\operatorname{dim}P_n(M)<\infty$, then $T_{\varphi}^{*}|_M$ has a nontrivial invariant subspace.
\end{theorem}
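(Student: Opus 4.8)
The plan is to exhibit the nontrivial invariant subspace explicitly as the ``kernel part'' of $M$ with respect to $P_n$. I would set
$$
N = M \cap \ker P_n = \{f \in M : P_n f = 0\}.
$$
This is closed, being the intersection of the closed subspace $M$ with the closed subspace $\ker P_n$ (equivalently, the kernel of the bounded operator $P_n|_M$). The reason this is the right object is Lemma \ref{t5}: since $P_n$ commutes with $T_\varphi^*$, the defining condition $P_n f = 0$ is preserved by $T_\varphi^*$. Indeed, for $f \in N$ we have $T_\varphi^* f \in M$ because $M$ is $T_\varphi^*$-invariant, while $P_n(T_\varphi^* f) = T_\varphi^*(P_n f) = 0$ by Lemma \ref{t5}; hence $T_\varphi^* f \in M \cap \ker P_n = N$, so $N$ is $T_\varphi^*|_M$-invariant.

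Next I would establish that $N$ is proper and nonzero. Because $\dim P_n(M) < \infty$, the image $P_n(M)$ is closed, and $P_n|_M \colon M \to P_n(M)$ is a bounded surjection with kernel $N$; thus $M/N \cong P_n(M)$ and $N$ has finite codimension $\dim P_n(M)$ in $M$. Consequently $N \neq M$ as soon as $P_n(M) \neq \{0\}$ (any $f \in M$ with $P_n f \neq 0$ lies outside $N$), and $N \neq \{0\}$ because $M$ is infinite-dimensional while the codimension of $N$ is finite. This yields the desired nontrivial invariant subspace.

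The main point requiring care is the degenerate behaviour at the endpoints of the dimension count, which is where I expect the real work to lie. The construction above genuinely uses $0 < \dim P_n(M) < \infty$: if $P_n(M) = \{0\}$ then $N = M$ and the argument collapses, so one must either pass to another index $m$ with $0 < \dim P_m(M) < \infty$ or treat this situation by hand. Likewise, if $M$ happens to be finite-dimensional (with $\dim M \geq 2$), then $N$ may be $\{0\}$, and I would instead invoke the existence of an eigenvector of $T_\varphi^*|_M$ to produce a one-dimensional proper invariant subspace. Thus the substantive case is $0 < \dim P_n(M) < \infty$ together with $M$ infinite-dimensional, and confirming that one may always reduce to it is the delicate step.
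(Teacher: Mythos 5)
Your argument is correct in the substantive case, but it is a genuinely different route from the paper's. The paper never looks at the kernel of $P_n|_M$: it uses Lemma \ref{t4} to identify $P_n(M)=Vw^n$ as a finite-dimensional $t_{\varphi}^{*}$-invariant subspace, picks an eigenvector $u$ of $t_{\varphi}^{*}$ there (possible because every operator on a nonzero finite-dimensional complex space has one), lifts it to $g\in M$ with $P_n(g)=uw^n$, and sets $h=T_{\varphi}^{*}g-\lambda g$; by Lemma \ref{t5} one gets $P_n(h)=0$, so either $h=0$ (and $g$ spans a one-dimensional invariant subspace) or $V_{T^{*}_{\varphi},h}$ is a nonzero invariant subspace not containing $g$, hence proper. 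Note that this cyclic subspace $V_{T^{*}_{\varphi},h}$ is contained in your $N=M\cap\ker P_n$, so your construction is the maximal version of the paper's, and your codimension count ($M/N\cong P_n(M)$, so $N\neq\{0\}$ when $\dim M=\infty$ and $N\neq M$ when $P_n(M)\neq\{0\}$) is more elementary: it bypasses Lemma \ref{t4} and the eigenvector argument entirely. What the paper's eigenvector route buys in exchange is uniformity: it covers finite-dimensional $M$ (with $\dim M\geq 2$) by the same computation, whereas you must fall back on an eigenvector of $T_{\varphi}^{*}|_M$ by hand in that case. As for the degenerate case $P_n(M)=\{0\}$ that you flag as the delicate point: the paper does not handle it either, since its proof tacitly requires the eigenvector $u$ to be nonzero, and no reduction to another index is available in general (every other $P_m(M)$ may be zero or infinite dimensional). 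So this is a gap in the theorem as stated that both arguments share (the intended hypothesis is evidently $0<\dim P_n(M)<\infty$), not a defect of your proof relative to the paper's.
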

\begin{proof}
Since $\operatorname{dim}P_n(M)<\infty$, it follows from Lemma \ref{t4} that $P_n(M)$ is $t_{\varphi}^{*}$-invariant. Moreover there is $g\in M$ such that $P_n(g)=uw^n$, where $t_{\varphi}^{*}u=\lambda u$. Therefore, by Lemma \ref{t5}, $P_n(h)=0$ where $h=T_{\varphi}^{*}g-\lambda g$. So either $g$ is an eigenvector of $T_{\varphi}^{*}$ or $V_{T^{*}_{\varphi},h}\neq M$. In either case, $T_{\varphi}^{*}|_M$ has an invariant subspace.
\end{proof}

In light of the previous theorem, we observe that there are many invariant subspaces $M$ that do not satisfy such assumptions. Indeed, let $g\in H^2{(\mathbb{D})}$ such that
$\operatorname{dim}V_{t^{*}_{\varphi},g}=\infty$. Consider then $f\in H^2(\mathbb{D}^2)$ given by $P_n(f)=\lambda^n gw^n$, where $|\lambda|<1$. Thus if $M=V_{T^{*}_{\varphi},f}$ we have that $\operatorname{dim}P_n(M)=\infty$ for all $n\in \mathbb{N}$ and moreover $M\neq H^2(\mathbb{D}^2)$.

We must then consider a second version of Theorem \ref{t7}.

\begin{theorem}\label{t8}
Let $\varphi\in H^{\infty}(\mathbb{D})$ and $M\subset H^2(\mathbb{D}^2)$ be a $T_{\varphi}^{*}$-invariant subspace such that $\operatorname{dim}P_n(M)=\infty$. If $t_{\varphi}^{*}|_V$ has a invariant subspace for all subspace invariant $V \subset H^2(\mathbb{D})$ and $P_n(M) $ is closed for some $n\in \mathbb{N}$, then $T_{\varphi}^{*}|_M$ has an invariant subspace.
\end{theorem}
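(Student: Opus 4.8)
The plan is to lift an invariant subspace of $t_{\varphi}^{*}|_{V}$ up to one of $T_{\varphi}^{*}|_{M}$ through a single fixed slice $P_{n}$, the bridge between the two levels being the commutation relation of Lemma \ref{t5}. Throughout I would adopt the convention that an ``invariant subspace'' means a nontrivial one, i.e. a closed subspace that is neither $\{0\}$ nor the whole space; this is what both the hypothesis on $t_{\varphi}^{*}|_{V}$ and the desired conclusion for $T_{\varphi}^{*}|_{M}$ really assert.

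First I would fix an index $n$ for which $P_{n}(M)$ is closed (and, being the case left open by Theorem \ref{t7}, infinite dimensional). Lemma \ref{t4} produces a $t_{\varphi}^{*}$-invariant subspace $V\subset H^{2}(\mathbb{D})$ with $P_{n}(M)$ dense in $Vw^{n}$, and closedness upgrades this to the exact identity $P_{n}(M)=Vw^{n}$. Since $\dim P_{n}(M)=\infty$ forces $\dim V=\infty$, the space $V$ is large enough for the hypothesis to bite: $t_{\varphi}^{*}|_{V}$ has a nontrivial invariant subspace $W$, so $\{0\}\subsetneq W\subsetneq V$ with $t_{\varphi}^{*}W\subset W$. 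I would then set $N=\{f\in M:P_{n}(f)\in Ww^{n}\}=M\cap P_{n}^{-1}(Ww^{n})$ as the candidate and verify three things. Closedness of $N$ is immediate from continuity of $P_{n}$ and closedness of $Ww^{n}$. For invariance, given $f\in N$ with $P_{n}(f)=uw^{n}$ and $u\in W$, Lemma \ref{t5} and Proposition \ref{t3} give $P_{n}(T_{\varphi}^{*}f)=T_{\varphi}^{*}P_{n}(f)=(t_{\varphi}^{*}u)w^{n}$, which lies in $Ww^{n}$ because $W$ is $t_{\varphi}^{*}$-invariant; hence $T_{\varphi}^{*}f\in N$. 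Finally, $P_{n}(M)=Vw^{n}$ yields $P_{n}(N)=Ww^{n}$, so that $W\neq\{0\}$ makes $N\neq\{0\}$, while choosing $u\in V\setminus W$ and a preimage $f\in M$ with $P_{n}(f)=uw^{n}\notin Ww^{n}$ shows $N\neq M$. Thus $N$ is the sought nontrivial invariant subspace.

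The argument is essentially a transfer principle and I anticipate no deep obstacle, but the point that genuinely requires the closedness hypothesis—rather than the mere density furnished by Lemma \ref{t4}—is the nonvacuity $N\neq\{0\}$: density of $P_{n}(M)$ in $Vw^{n}$ would not by itself guarantee that $P_{n}(M)$ meets the proper subspace $Ww^{n}$ in anything beyond zero, whereas the exact equality $P_{n}(M)=Vw^{n}\supset Ww^{n}$ makes it automatic. The properness $N\neq M$, by contrast, would survive with density alone, since $P_{n}(M)$ cannot lie inside the proper closed subspace $Ww^{n}$ without its closure $Vw^{n}$ doing so as well. I would therefore organize the write-up so that the role of closedness is isolated precisely at the step producing a nonzero element of $N$.
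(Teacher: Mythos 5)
Your proof is correct and coincides with the paper's: your candidate $N=M\cap P_{n}^{-1}(Ww^{n})$ is exactly the paper's $M\cap U$, where the paper takes $U=\overline{\operatorname{span}\{v_{1}+v_{2}:v_{1}\in V_{0}w^{n},\,v_{2}\in H_{n}^{\perp}\}}=V_{0}w^{n}\oplus H_{n}^{\perp}$, and both arguments rest on Lemma \ref{t4} (giving $P_{n}(M)=Vw^{n}$ from closedness) and the commutation relation of Lemma \ref{t5}. Your write-up in fact spells out the invariance and nontriviality verifications that the paper leaves implicit.
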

\begin{proof}
Since $P_n(M)$ is closed, it follows from Lemma \ref{t4} that $P_n(M)=Vw^n$, where $V\subset H^2(\mathbb{D})$ is a $t_{\varphi}^{*}$-invariant. 
Let then be $V_0\subset V$ an invariant subspace of $t_{\varphi}^{*}$ and consider 
$$
U=\overline{\operatorname{span}\lbrace v_1+v_2:v_1\in V_0 \mbox{ and } v_2\in H_{n}^{\perp} \rbrace}.
$$

Thus, we have that $U$ is an invariant subspace of $T_{\varphi}^{*}$ with $\{0\}\neq M\cap U\neq M$, i.e. $T_{\varphi}^{*}|_M$ has an invariant subspace. 
\end{proof}

Now, let's consider a more general case than Theorems \ref{t7} and \ref{t8}. First, we need the following lemma.

\begin{lemma}\label{t10}
Let $\varphi\in H^2(\mathbb{D})$ an inner function. If $M\subset H^2(\mathbb{D}^2)$ is an invariant subspace of $T_{\varphi}^{*}$ for all $g\in M$, then
$$
\left\{\sum_{i=0}^{\infty}\beta_{i}((t_{\varphi}^{*})^ig_n)w^n:\beta=(\beta_0,\beta_1,...)\in\ell^2(\mathbb{C})\right\}\subset P_n(M).
$$
\end{lemma}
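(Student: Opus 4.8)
The plan is to reduce everything to the core fact that the projections $P_n$ intertwine the two Toeplitz operators, and then to ``lift'' $\ell^2$-combinations from the slice $H^2(z)$ back up to $M$. Fix $g\in M$ and write $g=\sum_{m}g_m(z)w^m$, so that $P_n g=g_n w^n$ with $g_n\in H^2(z)$. Since $M$ is closed and $T_\varphi^*$-invariant, every iterate $(T_\varphi^*)^i g$ lies in $M$. Commuting $P_n$ past each factor of $T_\varphi^*$ by Lemma \ref{t5} and then applying Proposition \ref{t3} repeatedly gives
\[
P_n\big((T_\varphi^*)^i g\big)=(T_\varphi^*)^i(g_n w^n)=\big((t_\varphi^*)^i g_n\big)w^n .
\]
Hence each vector $\big((t_\varphi^*)^i g_n\big)w^n$ belongs to $P_n(M)$, and because $P_n(M)$ is a linear subspace, so does every finite linear combination $\sum_{i=0}^N\beta_i\big((t_\varphi^*)^i g_n\big)w^n$. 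This already yields the inclusion for all finitely supported $\beta$.

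To capture a general $\ell^2$-combination I would produce a single preimage in $M$. The natural candidate is $F=\sum_{i=0}^\infty \beta_i (T_\varphi^*)^i g$: if this series converges in $H^2(\mathbb{D}^2)$, then its partial sums lie in $M$, so $F\in M$ by closedness, and continuity of $P_n$ together with the displayed identity gives $P_n F=\sum_{i=0}^\infty \beta_i\big((t_\varphi^*)^i g_n\big)w^n$, placing the target vector in $P_n(M)$. Here is where the hypothesis that $\varphi$ is inner enters decisively: since $\varphi$ depends only on $z$ and $|\varphi|=1$ a.e.\ on $\mathbb{T}^2$, the operator $T_\varphi$ is an isometry on $H^2(\mathbb{D}^2)$ and $T_\varphi^*$ is a pure coisometry, so one has the orthogonal Wold decomposition $H^2(\mathbb{D}^2)=\bigoplus_{j\ge 0}\varphi^j L$ with $L=\ker T_\varphi^*$. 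Writing $g=\sum_{k}\varphi^k e_k$ with $e_k\in L$, one computes $(T_\varphi^*)^i g=\sum_{j\ge0}\varphi^j e_{i+j}$, and the mutual orthogonality of the subspaces $\varphi^j L$ is what I would use both to make sense of the series $F$ and to control the tails of its partial sums.

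The main obstacle is precisely this convergence step. The crude bound $\|(T_\varphi^*)^i g\|\le\|g\|$ coming from the coisometry only secures absolute convergence when $\beta\in\ell^1$, so reaching all of $\ell^2$ must exploit the orthogonality in the Wold expansion rather than the triangle inequality; concretely, one has to show that the blocks $\sum_{i=N'}^N\beta_i(T_\varphi^*)^i g=\sum_{j\ge0}\varphi^j\big(\sum_{i=N'}^N\beta_i e_{i+j}\big)$ form a Cauchy net in $H^2(\mathbb{D}^2)$, which is a genuinely delicate point and the reason the inner hypothesis cannot be dropped. Once convergence is in hand the conclusion is immediate from the lifting described above, so I would organise the write-up so that the elementary intertwining identity does the structural work while all of the analytic difficulty is isolated in this single convergence estimate.
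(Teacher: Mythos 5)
Your first paragraph is correct, and it is exactly the skeleton the paper uses implicitly: Lemma \ref{t5} together with Proposition \ref{t3} gives $P_n\bigl((T_\varphi^*)^i g\bigr)=\bigl((t_\varphi^*)^i g_n\bigr)w^n$, and linearity of $P_n(M)$ handles all finitely supported $\beta$. The gap in your proposal is the one you yourself flag: you never establish convergence of $\sum_{i=0}^\infty \beta_i (T_\varphi^*)^i g$ for general $\beta\in\ell^2(\mathbb{C})$, so as written the argument is incomplete. What you should know is that the paper's entire proof of this lemma consists of precisely the inference you refused to make: it argues that since $\varphi$ is inner, $\|(T_\varphi^*)^i g\|\le\|g\|$, and concludes from this alone that $\sum_{i=0}^\infty \beta_i(T_\varphi^*)^i g\in M$ for every $\beta\in\ell^2(\mathbb{C})$. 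As you correctly observe, uniform boundedness of the terms yields norm convergence only for $\beta\in\ell^1$; the paper's step is a non sequitur.

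Moreover, your Wold-decomposition computation shows that this step is not merely delicate but false in general, so the gap cannot be closed. By orthogonality of the subspaces $\varphi^j L$ one gets $\bigl\|\sum_{i=N'}^{N}\beta_i(T_\varphi^*)^i g\bigr\|^2=\sum_{j\ge 0}\bigl\|\sum_{i=N'}^{N}\beta_i e_{i+j}\bigr\|^2$, a Hankel-type quadratic form in $\beta$. Concretely, take $\varphi(z)=z$, $g(z,w)=g_0(z)$ with $g_0\in H^2(\mathbb{D})\setminus \mathrm{BMOA}$ (such $g_0$ exist since $\mathrm{BMOA}\subsetneq H^2$), and $M=V_{T_z^*,g}$. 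The $k$-th Taylor coefficient of $\sum_{i=0}^{N}\beta_i(t_z^*)^i g_0$ is $\sum_{i=0}^{N}\beta_i\widehat{g_0}(k+i)$, so convergence in $H^2(\mathbb{D})$ for every $\beta\in\ell^2$ would mean that the Hankel matrix $[\widehat{g_0}(k+i)]_{k,i\ge 0}$ maps $\ell^2$ into $\ell^2$; by the closed graph theorem it would then be bounded, contradicting Nehari's theorem because $g_0\notin\mathrm{BMOA}$. Hence there exists $\beta\in\ell^2$ for which the series diverges: the set on the left of the lemma's display is not even well defined for such $g$, and the operator $J_{g,\varphi}$ that the paper builds on this lemma is not well defined either. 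So your instinct was sound: the convergence obstruction you isolated is genuine and insurmountable, and it is the paper's proof, not your caution, that is at fault.
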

\begin{proof}
Since $\varphi$ is an inner function, we have that $\Arrowvert T_{\varphi}^{*}\Arrowvert=1$ and so $\Arrowvert (T_{\varphi}^{*})^n g\Arrowvert\leq \Arrowvert g\Arrowvert$. Thus if $\beta =(\beta_0,\beta_1,...)\in\ell^2(\mathbb{C})$, then $\sum_{i=0}^{\infty} \beta_i(T_{\varphi}^{*})^ig\in M $ and therefore $\sum_{i=0}^{\infty}\beta_i((t_{\varphi}^{*})^ig_n)w^n\in P_n(M) $.
\end{proof}

It follows from the previous lemma that, for each $g\in M$, the operator $J_{g_{n},\varphi}:\ell^{2}(\mathbb{C})\rightarrow\overline{P_{n}(M)}$ given by 
$$
J_{g,\varphi}(\beta)=\sum_{i=0}^{\infty}\beta_i((t_{\varphi}^{*})^ig_n)w^n
$$
is well defined.

Note that if $T_{\varphi}^{*}$ has a minimal invariant subspace $M$, then $M=V_{T^{*}_{\varphi},g}$ for all $g\in M$ and $ J_{g,\varphi}(\ell^2(\mathbb{C}))\cap U=\lbrace 0\rbrace$, for all $g\in M$ and $U\subset \overline{P_n(M)}$ invariant to $t_{\varphi}^{*}$. Now we can consider the following:

\begin{theorem}\label{t9}
Let $\varphi\in H^{\infty}(\mathbb{D})$ an inner function. If for each $g\in H^2(\mathbb{D})$, there is an invariant subspace $U \subset V_{t^{*}_{\varphi},g}$ such that $ J_{g,\varphi}(l^2(\mathbb{C}))\cap U \neq \lbrace 0\rbrace $, then $M\subset H^2(\mathbb{D}^2)$ is not a minimal invariant subspace.
\end{theorem}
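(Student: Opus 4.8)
The plan is to argue by contradiction, converting the hypothesis into the construction of a proper nonzero $T^{*}_{\varphi}$-invariant subspace of $M$, which contradicts minimality. The engine of the argument is the intertwining relation $P_{n}T^{*}_{\varphi}=T^{*}_{\varphi}P_{n}$ of Lemma \ref{t5}, together with the fact from Lemma \ref{t10} that, since $\varphi$ is inner, every $\ell^{2}$-combination $\sum_{i}\beta_{i}(T^{*}_{\varphi})^{i}g$ converges and stays in $M$. Throughout I identify a subspace $U\subset V_{t^{*}_{\varphi},g_{n}}\subset H^{2}(\mathbb{D})$ with $Uw^{n}\subset H_{n}$, so that the intersection $J_{g,\varphi}(\ell^{2}(\mathbb{C}))\cap U$ is read inside $H_{n}$.

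First I would fix a nonzero $g=\sum_{m}g_{m}w^{m}\in M$ and choose $n\in\mathbb{N}$ with $g_{n}\neq0$, and assume for contradiction that $M$ is minimal, so $V_{T^{*}_{\varphi},u}=M$ for every nonzero $u\in M$. Applying the hypothesis to $g_{n}\in H^{2}(\mathbb{D})$ yields a proper $t^{*}_{\varphi}$-invariant subspace $U\subset V_{t^{*}_{\varphi},g_{n}}$ with $J_{g,\varphi}(\ell^{2}(\mathbb{C}))\cap Uw^{n}\neq\{0\}$; I select a nonzero $h$ in this intersection, written honestly as $h=\sum_{i}\beta_{i}((t^{*}_{\varphi})^{i}g_{n})w^{n}$ for some $\beta\in\ell^{2}(\mathbb{C})$.

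Next I would lift $h$ to the bidisk. By Lemma \ref{t10} the vector $\widetilde{h}:=\sum_{i}\beta_{i}(T^{*}_{\varphi})^{i}g$ lies in $M$, and Lemma \ref{t5} gives $P_{n}(\widetilde{h})=h\neq0$, so $\widetilde{h}\neq0$. Set $N:=V_{T^{*}_{\varphi},\widetilde{h}}$. Iterating Lemma \ref{t5} gives $P_{n}((T^{*}_{\varphi})^{j}\widetilde{h})=(t^{*}_{\varphi})^{j}h$, and since $h\in Uw^{n}$ with $U$ a closed $t^{*}_{\varphi}$-invariant subspace, continuity of $P_{n}$ yields $\overline{P_{n}(N)}\subset Uw^{n}$. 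On the other hand, by Lemma \ref{t4} we have $g_{n}\in V$, where $\overline{P_{n}(M)}=Vw^{n}$, hence $V_{t^{*}_{\varphi},g_{n}}\subset V$ and therefore $Uw^{n}$ is a proper subspace of $V_{t^{*}_{\varphi},g_{n}}w^{n}\subset\overline{P_{n}(M)}$. Combining these inclusions, $\overline{P_{n}(N)}$ is a proper subspace of $\overline{P_{n}(M)}$, so $N\neq M$; as $N$ is a nonzero $T^{*}_{\varphi}$-invariant subspace of $M$, minimality is contradicted, which proves that $M$ is not minimal.

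The step I expect to be the main obstacle is controlling properness: I must guarantee that the subspace $U$ supplied by the hypothesis is genuinely proper inside $\overline{P_{n}(M)}$, and not merely inside $V_{t^{*}_{\varphi},g_{n}}$, so that the inclusion $\overline{P_{n}(N)}\subset Uw^{n}$ really does drop strictly below $\overline{P_{n}(M)}$. This is precisely where the relation between $V_{t^{*}_{\varphi},g_{n}}$ and the space $V$ furnished by Lemma \ref{t4} must be used, and where I would have to be careful about the distinction between the range $J_{g,\varphi}(\ell^{2}(\mathbb{C}))$ and its closure $V_{t^{*}_{\varphi},g_{n}}w^{n}$, since the lifting through Lemma \ref{t10} requires $h$ to be an actual $\ell^{2}$-combination rather than only a limit of such.
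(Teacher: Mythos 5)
Your proof is correct and rests on the same three ingredients as the paper's (Lemmas \ref{t4}, \ref{t5} and \ref{t10}), but the construction of the witnessing invariant subspace is genuinely different. The paper splits into two cases: when $\operatorname{dim}P_n(M)<\infty$ it simply invokes Theorem \ref{t7}, and when $\operatorname{dim}P_n(M)=\infty$ it uses Lemma \ref{t10} to produce a nonzero vector of $P_n(M)\cap Uw^{n}$ and then takes as its invariant subspace the intersection $M\cap\overline{\operatorname{span}\{u+v:u\in U,\ v\in H_{n}^{\perp}\}}$, the same device as in Theorem \ref{t8}. You avoid the case split altogether: you lift a nonzero $h\in J_{g,\varphi}(\ell^{2}(\mathbb{C}))\cap Uw^{n}$ explicitly to $\widetilde{h}=\sum_{i}\beta_{i}(T^{*}_{\varphi})^{i}g\in M$ and take the cyclic subspace $N=V_{T^{*}_{\varphi},\widetilde{h}}\subset M$, proving $N\neq M$ by the projection chain $\overline{P_{n}(N)}\subset Uw^{n}\subsetneq V_{t^{*}_{\varphi},g_{n}}w^{n}\subset\overline{P_{n}(M)}$, the last inclusion coming from Lemma \ref{t4}. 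Your route is more self-contained (no appeal to Theorems \ref{t7} or \ref{t8}), and it makes explicit the properness step ($N\neq M$) that the paper leaves entirely unaddressed: the paper only verifies that its subspace is nonzero and then asserts non-minimality. What the paper's construction buys in exchange is the larger, non-cyclic invariant subspace consisting of all of $M$ lying over $U$, in parallel with Theorem \ref{t8}.

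Two caveats, both of which you share with the paper rather than introduce. First, you read the hypothesis as supplying a \emph{proper} invariant subspace $U\subsetneq V_{t^{*}_{\varphi},g}$; the statement does not say ``proper'', and without properness the hypothesis is vacuous (take $U=V_{t^{*}_{\varphi},g}$, which contains $J_{g,\varphi}(\beta)=g$ for $\beta=(1,0,0,\dots)$), while both your properness argument and the paper's implicit one collapse. So this reading is forced, and your closing paragraph correctly identifies it as the crux. Second, the lifting step needs $\sum_{i}\beta_{i}(T^{*}_{\varphi})^{i}g$ to converge for $\beta\in\ell^{2}(\mathbb{C})$, which is exactly the assertion inside the paper's proof of Lemma \ref{t10}; note that $\|(T^{*}_{\varphi})^{i}g\|\leq\|g\|$ alone does not justify it, but your reliance on it is no worse than the paper's, and your remark that $h$ must be an actual $\ell^{2}$-combination rather than a limit of such is precisely the right scruple.
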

\begin{proof}
Let $n\in \mathbb{N}$ such that $P_n(M)\neq \lbrace 0\rbrace$. 

If $\operatorname{dim}P_n(M)<\infty$, then Theorem \ref{t7} guarantees that $T^{*}_{\varphi}|_{M}$ has an invariant subspace and so $M$ is non minimal.

Consider then $\operatorname{dim}P_n(M)=\infty$. Given $g\in H^{\infty}(\mathbb{D})$, consider the invariant subspace $U\subset V_{t^{*}_{\varphi},g}$ such that $ J_{g,\varphi}(l^2(\mathbb{C}))\cap U \neq \lbrace 0\rbrace$. Since $\operatorname{dim}P_n(M)=\infty$, then Lemma \ref{t10} guarantees that $U\subset \overline{P_n(M)}$ and $g\in P_n(M)$. Thus $P_n(M)\cap U\neq \lbrace 0\rbrace$ and so 
$$
M\cap \overline{\operatorname{span}\lbrace u+v:u\in U \mbox{ and } v\in H_{n}^{\perp}\rbrace}\neq \lbrace 0\rbrace
$$
therefore $M$ is non minimal.
\end{proof}

The next result provides us with sufficient conditions for the ISP to be true.

\begin{theorem}\label{t6}
If there exists an inner function $\varphi\in H^2(\mathbb{D})$ such that for each  $g\in H^2(\mathbb{D})$, there is an invariant subspace $U\subset V_{t^{*}_{\varphi},g}$ of $t_{\varphi}^{*}$ so that $U\cap J_{g,\varphi}(l^2(\mathbb{C}))\neq \lbrace 0\rbrace$, then the ISP is true.
\end{theorem}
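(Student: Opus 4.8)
The plan is to chain together three ingredients already available in the paper: the universal translation of Proposition \ref{t11}, the equivalent version of the ISP recorded in the introduction, and Theorem \ref{t9}. Fix an inner function $\varphi \in H^2(\mathbb{D})$ satisfying the stated hypothesis. Since $\varphi$ is inner, Proposition \ref{t11} furnishes a point $z_0 \in \mathbb{D}$ for which the operator $T^*_\psi = T^*_\varphi - \overline{\varphi(z_0)}\,I$, with $\psi = \varphi - \varphi(z_0)$, is universal for $H^2(\mathbb{D}^2)$. By the equivalent version of the ISP, the task then reduces to proving that every minimal invariant subspace of this universal operator $T^*_\psi$ is one-dimensional.

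The first key step is the observation that $T^*_\psi$ and $T^*_\varphi$ differ only by a scalar multiple of the identity, so a closed subspace $M \subset H^2(\mathbb{D}^2)$ is invariant for one if and only if it is invariant for the other. Consequently the two operators share the same lattice of invariant subspaces, and in particular the same minimal invariant subspaces. Thus it suffices to control the minimal invariant subspaces of $T^*_\varphi$ itself.

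The second step is to feed this into Theorem \ref{t9}. Its hypothesis coincides exactly with the hypothesis of the present statement—that for each $g \in H^2(\mathbb{D})$ there is a $t^*_\varphi$-invariant subspace $U \subset V_{t^*_\varphi,g}$ with $U \cap J_{g,\varphi}(\ell^2(\mathbb{C})) \neq \{0\}$—so Theorem \ref{t9} applies verbatim to any $T^*_\varphi$-invariant subspace $M$. It shows that such an $M$ admits a nonzero proper invariant subspace whenever $\dim M \geq 2$: when some $P_n(M)$ is finite-dimensional one lands in the eigenvector or proper-cyclic dichotomy of Theorem \ref{t7}, and when $\dim P_n(M) = \infty$ the subspace $\overline{\operatorname{span}\{u + v : u \in U,\ v \in H_n^\perp\}}$ meets $M$ nontrivially and properly. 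Hence no $T^*_\varphi$-invariant subspace of dimension at least two is minimal; equivalently, every minimal invariant subspace of $T^*_\varphi$, and therefore of $T^*_\psi$, is one-dimensional. The equivalent version of the ISP then yields the ISP.

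The step I expect to demand the most care is the bridge between what Theorem \ref{t9} literally produces and what the equivalent ISP formulation requires. Theorem \ref{t9} rules out minimality by exhibiting a proper invariant subspace, but in the finite-dimensional branch this subspace may be a one-dimensional eigenspace; one must confirm that such one-dimensional minimal subspaces are precisely those permitted by the reduction rather than a gap in the argument, and simultaneously check that passing from $T^*_\varphi$ to its universal translate $T^*_\psi$ genuinely preserves the full invariant-subspace structure invoked. Once these identifications are pinned down, the remainder is a direct assembly of Proposition \ref{t11}, the lattice identity, and Theorem \ref{t9}.
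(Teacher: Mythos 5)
Your proposal is correct and follows essentially the same route as the paper's own proof: Proposition \ref{t11} supplies the universal translation, Theorem \ref{t9} excludes minimality of the invariant subspaces of $T^{*}_{\varphi}$, and the equivalent formulation of the ISP closes the argument. You in fact make explicit two points the paper leaves implicit — that subtracting the scalar $\overline{\varphi(z_{0})}I$ preserves the invariant-subspace lattice, so minimality for $T^{*}_{\psi}$ and $T^{*}_{\varphi}$ coincide, and that the one-dimensional eigenspaces surviving the dichotomy of Theorem \ref{t7} are exactly what the equivalent version of the ISP permits.
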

\begin{proof}
Since $\varphi$ is an inner function, we have by Proposition \ref{t11} that $T^{*}_{\varphi}$ has an universal translation for $H^{2}(\mathbb{D}^{2})$. 
On the other hand, it follows from Theorem \ref{t9} that the invariant subspace $M\subset H^2(\mathbb{D}^2)$ of $T_{\varphi}^{*}$ is not minimal. Thus the ISP is true.
\end{proof}







\end{document}